\newtheorem{theorem}{Theorem}[section]
\newtheorem{lemma}{Lemma}[section]
\newtheorem{proposition}{Proposition}[section]
\newcommand{\ds}{\displaystyle}
\newcommand{\Sp}{\mathop{\rm Lin\,}\nolimits}
\newenvironment{proof}{{\bf Proof.}}{\par\hspace{25em}\rule{1ex}{1ex}\par}
\title{On geodesics of Berger tangent sphere bundle of Hermitian locally symmetric manifold.}
\author{Alexander Yampolsky}
\date{}
\begin{document}
\maketitle
\begin{abstract}
    We propose a special deformation of the Sasaki metric on tangent and unit tangent bundle
    of a Hermitian locally symmetric manifold. Geodesics of this deformed metric have different
    projections on a base manifold for tangent or unit tangent bundle cases
    in contrast to usual Sasaki metric. Nevertheless,
    the projections of geodesics of the unit tangent bundle
    still preserve the property to have all  geodesic curvatures constant.
    \\[1ex]
    {\it Keywords:} Sasaki metric, Hermitian manifold, geodesics.\\[1ex]
    {\it AMS subject class:} Primary 53B20, 53B25; Secondary 53B21.\\[3ex]
    \end{abstract}

\section*{Introduction}

    Let $(M,g)$ be Riemannian manifold. Denote by $TM$ and $T_1M$ tangent and unit tangent
    bundle of $(M,g)$ with Sasaki metric. It is easy to prove that if $\pi$ is
    a bundle projection $\pi:TM\to M$ and $\Gamma(\sigma)$
    is geodesic on $TM$ or $T_1M$ then the projected curve
    $\gamma(\sigma)=\pi\circ\Gamma(\sigma)$ on $M$ is the same. In other words,
    geodesic lines on $TM$ or $T_1M$ are generated by different vector fields along
    {\it the same} set of curves in a base manifold. A complete description of base
    curves and vector fields generating geodesics in the case of base manifold of
    constant curvature one can find in \cite{Sasaki} and \cite{Sato}. It was proved
    that the projected curves have constant (possibly zero) first and second geodesic
    curvatures while the others vanish. P.Nagy \cite{Nagy} generalized
    these results for the case of locally symmetric base  and characterized  the projected
    curves by all constant geodesic curvatures.

    It present paper we propose a special deformation of Sasaki metric for the case of
    Hermitian locally symmetric base manifold which distinguish the projections of
    geodesics in $TM$ and $T_1M$ cases but preserves the property to have constant
    geodesic curvatures in $T_1M$ case.

The general idea is M.Berger-type. Let $S^{2n-1}$ be a unit sphere in Euclidean space $E^{2n}$. Let $J$ be a standard
complex structure on $E^{2n}$. If $N$ is a unit normal vector field on $S^{2n-1}$ then $JN$ is a so-called {\it Hopf}
vector field on $S^{2n+1}$. M.Berger deformation of standard sphere metric assumes its deformation along
    integral trajectories of the Hopf vector field. Consider a Hermitian manifold
    $(M^{2n},g,J)$ and its tangent sphere bundle. Then at each point of $M^{2n}$ the tangent
    sphere $S^{2n-1}$ carries the Hopf vector field. Applying M.Berger metric deformation
    to each tangent sphere we get the tangent sphere bundle over $M^{2n}$ with
    M.Berger metric spheres the fibers. Call it {\it Berger tangent (sphere) bundle}.
    The main result of the paper is the following.

    {\bf Theorem \ref{main}}\ {\it
    Let $\gamma=\pi\circ\Gamma$ be a projection of a curve $\Gamma$ on the Berger tangent
    sphere bundle over Hermitian locally symmetric manifold $M$. Then all geodesic
    curvatu\-res of $\gamma$ are constant.
    }

    If $\Gamma$ is a geodesic on  Berger {\it tangent} bundle  $TM$, then the projected curve
    $\gamma=\pi\circ\Gamma$ does not posess this property.

    In a specific case of of $CP^n$ the Theorem \ref{main} can be improved.

    {\bf Theorem \ref{CP}}\ {\it
    Let $\Gamma$ be a geodesic of the Berger tangent sphere bundle over the complex
    projective space $CP^n$. Then the geodesic curvatures of $\gamma=\pi\circ\Gamma$
    are all constant and $k_6=\dots =k_{n-1}=0$.
    }

{\bf Acknowledgement}: The author thanks P. Nagy (Debrecen, Hungary) for the idea to consider the deformation proposed.
\section{Some general considerations}
    Let $(M,g)$ be $n$-dimensional Riemannian manifold  with metric $g$. Denote by
    $\big<\cdot,\cdot\big>$ a scalar product with respect to $g$.
    A natural Riemannian metric on the tangent bundle has been defined by S. Sasaki.
    We describe it briefly in terms of the {\it connection map}.

    At each point $Q=(q,\xi)\in TM$ the tangent space $T_QTM$ can be split
    into the so-called {\it vertical} and {\it horizontal} parts:
    $$
    T_{Q}TM=\mathcal{H}_{Q}TM \oplus \mathcal{V}_{Q}TM.
    $$
The vertical part  $\mathcal{V}_{Q}TM$ is tangent to the fiber, while the horizontal part is transversal to it.  Denote
$(x^1,\dots,x^n;\xi^1,\dots,\xi^n)$ the natural induced local coordinate system on $TM$. Denote
$\partial_i=\partial/\partial{x^i},\ \partial_{n+i}=\partial/\partial{\xi^i}$. Then for $\tilde X \in T_{Q}TM^n$ we
    have
$$
   \tilde X=\tilde X^i \partial_i + \tilde X^{n+i} \partial _{n+i}.
$$
Denote by  $\pi:TM \to M$ the tangent bundle projection map. Then its differential $\pi_*:T_{ Q}TM \to T_qM $  acts on
$\tilde X$ as $$\ds \pi_*\tilde X=\tilde X^i \partial _i$$  and defines a linear isomorphism between $\mathcal{V}_{
Q}TM$ and $T_qM$.

The so-called {\it connection map} $K: T_{ Q}TM \to T_qM$ acts on $\tilde X$ by the rule $$\ds K\tilde X=(\tilde
X^{n+i}+\Gamma_{jk}^{i}\xi^j\tilde X^k) \partial_i $$  and defines a linear isomorphism between $\mathcal{H}_{Q}TM$ and
$T_qM$. The  images $\pi_*\tilde X$ and  $K\tilde X$ are called {\it horizontal} and  {\it vertical } projections of
$\tilde X$, respectively.  It is easy to see that  $\mathcal{V}_{ Q}=\ker\pi_*|_{ Q},\ \mathcal{H}_{Q}=\ker K|_{Q}$.

Let $\tilde X,\tilde Y \in T_{ Q}TM.$
    The standard {\it Sasaki metric} on $TM$ is defined by the following scalar product
    $$
        \big<\big< \tilde X,\tilde Y \big>\big>\big|_Q=
        \big<\pi_* \tilde X, \pi_* \tilde Y\big>\big|_q+\big<K \tilde X,K \tilde Y\big>\big|_q
    $$
    at each point $Q=(q,\xi)$.
    Horizontal and vertical subspaces are mutually orthogonal with respect to Sasaki
    metric.

    The operations inverse to projections are called {\it lifts}. Namely, if
    $X \in T_qM^n$, then
    $$\ds X^h=X^i \partial_i -\Gamma_{jk}^i\xi^j X^k \partial_{n+i}$$
    is in $\mathcal{H}_{Q}TM$ and is called the {\it horizontal lift } of  X, and
    $$ \ds X^v=X^i \partial_{n+i} $$
    is in $\mathcal{V}_{Q}TM$ and is called the {\it vertical lift } of  $ X$.

    The Sasaki metric can be completely defined by scalar product of various
    combinations of lifts of vector fields from $M$ to $TM$ as
    $$
   \big<\big<X^h,Y^h\big>\big>=\big<X,Y\big>, \ \
   \big<\big<X^h,Y^v\big>\big>=0,             \ \
    \big<\big<X^v,Y^v\big>\big>=\big<X,Y\big>.
   $$

   Consider now as a base manifold $M$ a Hermitian manifold
   $(M^{2n},g,J)$. Define a  {\it deformation} of Sasaki metric along the $J\xi$
   directions in each fiber of the form
\begin{equation}\label{metric}
\begin{array}{l}
    \big<\big<X^h,Y^h\big>\big>=\big<X,Y\big>, \\
    \big<\big<X^h,Y^v\big>\big>=0,              \\
    \big<\big<X^v,Y^v\big>\big>=\big<X,Y\big>+\delta^2\big<X,J\xi\big>\big<Y,J\xi\big>,
\end{array}
\end{equation}
    where $J$ is an almost complex structure on $M$ and $\delta$ is some constant.

    Geometrically, this deformation means that we deform each tangent sphere
    $S^{2n-1}\subset T_qM$ along the fibers of standard Hopf fibration of $S^{2n-1}$
    at each point $q\in M$. We will refer to the tangent (sphere) bundle with
    the metric (\ref{metric}) as {\it Berger tangent (sphere) bundle}.

    In what follows we suppose that $M$ is { Hermitian locally symmetric} space. In
    this case $M$ is necessarily Kahlerian, i.e. $\nabla J=0$, and locally symmetric as a
    Riemannian manifold \cite[Proposition 9.1]{KN}.

    The following formulas are independent on the choice of tangent bundle metric and
    are known as Dombrowski formulas.
\begin{lemma} At each point $(q,\xi)\in TM$  the brackets of lifts of vector fields
    from $M$ to $TM$ are
    $$
\big[X^h,Y^h\big]=\big[X,Y\big]^h-\big(R(X,Y)\xi\big)^v, \quad \big[X^h,Y^v\big]=\big(\nabla_XY\big)^v, \quad
    \big[X^v,Y^v\big]=0,
    $$
    where $\nabla$ is the connection on $M$ and $R$ its curvature tensor.
\end{lemma}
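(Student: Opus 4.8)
The plan is to verify all three identities by a direct computation in the natural induced coordinates $(x^1,\dots,x^n;\xi^1,\dots,\xi^n)$, using the coordinate expressions $X^h=X^i\partial_i-\Gamma^i_{jk}\xi^jX^k\partial_{n+i}$ and $X^v=X^i\partial_{n+i}$ recorded above, together with the general formula $[A,B]=(A^\mu\partial_\mu B^\nu-B^\mu\partial_\mu A^\nu)\partial_\nu$ for the bracket of two vector fields on $TM$ (the indices $\mu,\nu$ running over all $2n$ coordinates). Throughout I will use three bookkeeping facts: the components $X^i,Y^i$ of vector fields on $M$ depend on $x$ only, so that $\partial_{n+j}X^i=0$; the fiber coordinates satisfy $\partial_{n+l}\xi^j=\delta^j_l$; and the Levi-Civita connection is torsion-free, so $\Gamma^i_{jk}=\Gamma^i_{kj}$.

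I would dispose of the two easy cases first. Since $X^v$ and $Y^v$ have only $\partial_{n+i}$-components with $x$-dependent coefficients, every term of $[X^v,Y^v]$ involves a derivative $\partial_{n+j}$ of such a coefficient and hence vanishes, giving $[X^v,Y^v]=0$. For $[X^h,Y^v]$ the $\partial_i$-component vanishes, so the bracket is vertical; computing its $(n+i)$-component, the horizontal part $X^k\partial_k$ of $X^h$ hits $Y^i$ to give $X^k\partial_kY^i$, while the vertical part $Y^l\partial_{n+l}$ of $Y^v$ hits the Christoffel term $-\Gamma^i_{jk}\xi^jX^k$ and, using $\partial_{n+l}\xi^j=\delta^j_l$, contributes $\Gamma^i_{lk}Y^lX^k$. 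Adding these and invoking the symmetry of $\Gamma$ yields precisely the components $(\nabla_XY)^i$, so $[X^h,Y^v]=(\nabla_XY)^v$.

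The main work is the case $[X^h,Y^h]$. Here both vector fields carry nontrivial horizontal and vertical components, so I would split the resulting bracket into its $\partial_i$-part and its $\partial_{n+i}$-part. The $\partial_i$-part involves only the horizontal coefficients $X^i,Y^i$ and collapses to the components $[X,Y]^i$ of the bracket on $M$. The $\partial_{n+i}$-part contains, on one hand, the term $-\Gamma^i_{jk}\xi^j[X,Y]^k$, which combines with the $\partial_i$-part to reconstitute the full horizontal lift $[X,Y]^h$, and, on the other hand, a genuinely new contribution in which the curvature appears: collecting the derivatives $\partial_k\Gamma^i_{jl}-\partial_l\Gamma^i_{jk}$ coming from differentiating the Christoffel terms against the horizontal directions, together with the quadratic products of the form $\Gamma\cdot\Gamma$ arising from the vertical cross terms, one recognizes the coordinate expression of the Riemann curvature tensor contracted with $\xi^j$.

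The step I expect to be the main obstacle is precisely this recognition of the curvature in the vertical part of $[X^h,Y^h]$: the computation generates several groups of terms and one must carefully cancel and regroup them, track which combinations are symmetric in $(X,Y)$ (and hence drop out of the antisymmetric bracket) and which survive, and match the index conventions so that the remaining expression is exactly $-R^i{}_{jkl}\xi^jX^kY^l=-(R(X,Y)\xi)^i$. Once this index bookkeeping is settled one obtains $[X^h,Y^h]=[X,Y]^h-(R(X,Y)\xi)^v$, which completes the proof.
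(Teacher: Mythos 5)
The paper itself offers no proof of this lemma: it simply records the bracket formulas as the known Dombrowski identities, noting only that they are independent of the choice of metric on $TM$. Your direct verification in the induced coordinates is therefore not "the paper's route" but it is the standard derivation, and your outline is correct. The two easy cases are handled completely and correctly (in particular you rightly observe that the torsion-freeness $\Gamma^i_{jk}=\Gamma^i_{kj}$ is what turns $X^k\partial_kY^i+\Gamma^i_{lk}Y^lX^k$ into $(\nabla_XY)^i$ in the mixed bracket). For the horizontal--horizontal case, the step you flag as the main obstacle does close exactly as you predict: writing out the $(n+i)$-component of $[X^h,Y^h]$ one gets
$$
-\Gamma^i_{kb}\xi^k[X,Y]^b-\xi^kX^aY^b\bigl(\partial_a\Gamma^i_{kb}-\partial_b\Gamma^i_{ka}-\Gamma^l_{ka}\Gamma^i_{lb}+\Gamma^l_{kb}\Gamma^i_{la}\bigr),
$$
whose first term reassembles $[X,Y]^h$ with the $\partial_i$-part and whose second term is precisely $-R^i{}_{kab}\xi^kX^aY^b=-(R(X,Y)\xi)^i$ once the symmetry of $\Gamma$ is used to match the index order in the coordinate expression of $R$. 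So the only thing your proposal leaves undone is this final explicit regrouping, and it is pure bookkeeping with no hidden difficulty; one small inaccuracy of phrasing is that no genuinely "symmetric in $(X,Y)$" terms need to be cancelled here --- every term either feeds into $[X,Y]^h$ or into the curvature, the antisymmetrization being automatic from the form $X^h(\cdot)-Y^h(\cdot)$ of the bracket. Compared with simply citing Dombrowski, as the paper does, your computation has the merit of making visible why the formulas are metric-independent: only the connection coefficients defining the horizontal lift ever enter.
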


    Denote by $\tilde \nabla$ the Levi-Civita connection of the metric (\ref{metric}).
    The following Kowalski-type lemma is the main tool for further considerations.

\begin{lemma}\label{Kow}

    The Levi-Civita connection of metric(\ref{metric}) is
    completely  defined at each point $(q,\xi)\in TM$ by
    $$
    \begin{array}{l}
    \tilde\nabla_{X^h}Y^h=\big(\nabla_XY\big)^h-\frac12 \big(R(X,Y)\xi\big)^v, \\[1ex]
    \tilde\nabla_{X^h}Y^v=\frac12\Big(R(\xi,Y)X+\delta^2\left<Y,J\xi\right>R(\xi,J\xi)X\Big)^h+
    \Big(\nabla_XY\Big)^v, \\ [1ex]
    \tilde\nabla_{X^v}Y^h=\frac12\Big(R(\xi,X)Y+\delta^2\left<X,J\xi\right>R(\xi,J\xi)Y\Big)^h
    \\[1ex]
    \tilde\nabla_{X^v}Y^v=\delta^2\Big(\left<X,J\xi\right>JY+\left<Y,J\xi\right>JX -\\[1ex]
    \qquad\qquad\frac{\delta^2}{1+\delta^2|\xi|^2}\big(\left<Y,\xi\right>\left<X,J\xi\right>+
    \left<X,\xi\right>\left<Y,J\xi\right>\big)J\xi\Big)^v,
    \end{array}
    $$
    where $\nabla$ is the Levi-Civita connection on $M$ and $R$ is its curvature
    tensor.
\end{lemma}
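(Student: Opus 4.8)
The plan is to derive every formula from the Koszul identity
$$2\langle\langle\tilde\nabla_A B,C\rangle\rangle = A\langle\langle B,C\rangle\rangle + B\langle\langle A,C\rangle\rangle - C\langle\langle A,B\rangle\rangle + \langle\langle[A,B],C\rangle\rangle - \langle\langle[A,C],B\rangle\rangle - \langle\langle[B,C],A\rangle\rangle,$$
applied when $A,B,C$ range over horizontal and vertical lifts $X^h,X^v$ of vector fields on $M$. Since the metric (\ref{metric}) pairs horizontal with vertical trivially, it suffices to pair each $\tilde\nabla_A B$ separately with $Z^h$ and with $Z^v$, so each of the four displayed identities splits into a horizontal-component computation and a vertical-component computation. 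To strip away the inessential derivative terms I would work at a fixed point $(q,\xi)$ and choose the fields $X,Y,Z$ so that $\nabla X=\nabla Y=\nabla Z=0$ at $q$; this annihilates the covariant-derivative contributions coming from the $M$-intrinsic part of the metric while retaining the genuinely new terms.

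First I would record how the lifts act as derivations on the three functions generating the metric: on $\langle X,Y\rangle$ (pulled back from $M$), on $\langle X,J\xi\rangle$, and on $|\xi|^2$. Using that a horizontal lift differentiates by parallel transport and that $M$ is Kahler ($\nabla J=0$), one obtains $X^h\langle Y,J\xi\rangle=\langle\nabla_X Y,J\xi\rangle$ and $X^v\langle Y,J\xi\rangle=\langle Y,JX\rangle$, together with $X^h|\xi|^2=0$ and $X^v|\xi|^2=2\langle X,\xi\rangle$. Combined with the Dombrowski bracket formulas stated above, these turn each Koszul expression into an algebraic identity in the curvature tensor $R$ and in $J$.

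The three mixed cases are then essentially bookkeeping. For $\tilde\nabla_{X^h}Y^h$ and for the horizontal parts of $\tilde\nabla_{X^h}Y^v$ and $\tilde\nabla_{X^v}Y^h$, the surviving contributions come only from the bracket $[X^h,Z^h]=[X,Z]^h-(R(X,Z)\xi)^v$; applying the pair symmetry $\langle R(X,Z)\xi,Y\rangle=\langle R(\xi,Y)X,Z\rangle$, and the same with $Y$ replaced by $J\xi$, produces both the standard Sasaki curvature term and the extra factor $\delta^2\langle Y,J\xi\rangle R(\xi,J\xi)X$ coming from the deformed vertical metric. Here the local symmetry $\nabla R=0$ ensures that horizontal derivatives of $R$ never intervene, so no further terms arise.

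The main obstacle is the vertical-vertical connection $\tilde\nabla_{X^v}Y^v$. Its horizontal component vanishes, since every Koszul term either pairs a vertical object against a horizontal one or differentiates a function constant along horizontal directions, so the whole output is vertical. Computing the vertical component, the skew-symmetry $\langle X,JY\rangle=-\langle Y,JX\rangle$ cancels the diagonal terms and leaves $\langle\langle\tilde\nabla_{X^v}Y^v,Z^v\rangle\rangle=\langle V,Z\rangle$ with $V=\delta^2(\langle Y,J\xi\rangle JX+\langle X,J\xi\rangle JY)$. The subtlety is that this right-hand side is written through the undeformed $g$, whereas the left-hand side uses the deformed vertical metric $G=g+\delta^2(J\xi)^\flat\otimes(J\xi)^\flat$; solving $G(W,\cdot)=g(V,\cdot)$ for the vertical vector $W$ requires inverting $G$ on the fiber. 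Testing against $Z=J\xi$ and using $|J\xi|^2=|\xi|^2$ and $\langle JX,J\xi\rangle=\langle X,\xi\rangle$ gives $g(W,J\xi)=g(V,J\xi)/(1+\delta^2|\xi|^2)$, and substituting back produces exactly the correction $-\frac{\delta^2}{1+\delta^2|\xi|^2}\big(\langle Y,\xi\rangle\langle X,J\xi\rangle+\langle X,\xi\rangle\langle Y,J\xi\rangle\big)J\xi$. This inversion is where all the novelty of the deformed metric concentrates; everything else reduces to the classical Kowalski computation.
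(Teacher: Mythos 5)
Your proposal follows the paper's proof essentially verbatim: the same Koszul-plus-Dombrowski computation, the same preliminary formulas for how $X^h$ and $X^v$ differentiate $\big<Y,J\xi\big>$ (the paper isolates these in a sublemma), and the same inversion of the deformed vertical metric --- testing against $Z=J\xi$ to get the $\frac{1}{1+\delta^2|\xi|^2}$ factor --- to extract $\tilde\nabla_{X^v}Y^v$. The only cosmetic differences are your normalization $\nabla X=\nabla Y=\nabla Z=0$ at the point, and your appeal to $\nabla R=0$, which is in fact unnecessary since no derivatives of $R$ enter the Koszul terms at this stage.
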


    \begin{proof}
    To prove this lemma we will need the useful formulas which we naturally
    gather in a separate sublemma.
\begin{lemma}\label{deriv}
    The following rules of differentiations are true:
    $$
    \begin{array}{l}
        X^h\big<\big<Y^h,Z^h\big>\big>=\big<\nabla_XY,Z\big>+
        \big<Y,\nabla_XZ\big> ,\\
        X^h\big<\big<Y^v,Z^v\big>\big>=\big<\big<(\nabla_XY)^v,Z^v\big>\big>+
        \big<\big<Y^v,(\nabla_XZ)^v\big>\big> ,\\
        X^v\big<\big<Y^h,Z^h\big>\big>=0,\\
        X^v\big<\big<Y^v,Z^v\big>\big>=\delta^2\Big(\big<Y,JX\big>\big<Z,J\xi\big>+
        \big<Y,J\xi\big>\big<Z,JX\big>\Big),
    \end{array}
    $$
    where $\left<\cdot,\cdot\right>$ means the scalar product with respect to metric
    of the base manifold.
\end{lemma}

    \begin{proof}
    \begin{itemize}
    \item[i)]
    Indeed, keeping in mind (\ref{metric}), we have
    $$
    X^h\big<\big<Y^h,Z^h\big>\big>=X^h\big<Y,Z\big>=\big<\nabla_XY\big>+\big<Y,\nabla_XZ\big>.
    $$
    \item[ii)] In a similar way
    $$
    \begin{array}{l}
    X^h\big<\big<Y^v,Z^v\big>\big>=X^h\Big(
    \big<Y,Z\big>+\delta^2\big<Y,J\xi\big>\big<Z,J\xi\big>\Big)=\\
    \big<\nabla_XY,Z\big>+\big<Y,\nabla_XZ\big>+\delta^2X^h\Big(\big<Y,J\xi\big>\big<Z,J\xi\big>\Big).
    \end{array}
    $$
    Since $M$ is Kahlerian, $\nabla_XJ=0$ and we have
    $$
    \begin{array}{l}
    X^h\big<Y,J\xi\big>=-X^h\big<JY,\xi\big>=
    -X^i\partial_i\big<JY,\xi\big>+\Gamma^s_{jk}\xi^jX^k\partial_{n+s}\big<JY,\xi\big>=\\[2ex]
    -X^i\big<J\nabla_iY,\xi\big>-X^i\xi^k\big<JY,\Gamma^s_{ki}\partial_s\big>+
    \Gamma^s_{ki}\xi^kX^i\big<JY,\partial_s\big>= \big<\nabla_XY,J\xi\big>.
    \end{array}
    $$
    Therefore,
    $$
    \begin{array}{l}
    X^h\big<\big<Y^v,Z^v\big>\big>=\big<\nabla_XY,Z\big>+\big<Y,\nabla_XZ\big>+
        \delta^2\big<\nabla_XY,J\xi\big>\big<Z,J\xi\big>+\\[2ex]
        \delta^2\big<Y,J\xi\big>\big<\nabla_XZ,J\xi\big>=
         \big<\big<(\nabla_XY)^v,Z^v\big>\big>+\big<\big<Y^v,(\nabla_XZ)^v\big>\big>.
    \end{array}
    $$
    \item[iii)] Rather evident, that
    $X^v\big<\big<Y^h,Z^h\big>\big>=X^v\big<Y,Z\big>=0$.
    \item[iv)] Finally, it is easy to see that
    $X^v\big<Y,J\xi\big>=X^i\partial_{n+i}\big<Y,J\xi\big>=\big<Y,JX\big>$ and
    therefore
    $$
    \begin{array}{l}
    X^v\big<\big<Y^v,Z^v\big>\big>=
    X^v\Big(\big<Y,Z\big>+\delta^2\big<Y,J\xi\big>\big<Z,J\xi\big>\Big)=\\
    \delta^2\Big(\big<Y,JX\big>\big<Z,J\xi\big>+\big<Y,J\xi\big>\big<Z,JX\big>\Big).
    \end{array}
    $$
    \end{itemize}
    \end{proof}

    Now we can prove the lemma relatively easy applying the Kozsul formula for the
    Levi-Civita connection
    $$
    \begin{array}{rl}
    2\big<\nabla_AB,C\big>=&A\big<B,C\big>+B\big<A,C\big>-C\big<A,B\big>+\\
    &\big<[A,B],C\big>+\big<[C,A],B\big>-\big<[B,C],A\big>
    \end{array}
    $$
    and Dombrowski formulas to the metric (\ref{metric}).

    (i) Setting $A=X^h, B=Y^h, C=Z^h$ we see that
    $$
        2\big<\big<\tilde\nabla_{X^h}Y^h,Z^h\big>\big>=2\big<\nabla_XY,Z\big>=
            2\big<\big<\,(\nabla_XY)^h,Z^h\big>\big>.
    $$
    Setting $A=X^h, B=Y^h, C=Z^v$, we have
    $$
    \begin{array}{rr}
    2\big<\big<\tilde\nabla_{X^h}Y^h,Z^v\big>\big>=&-Z^v\big<\big<X^h,Y^h\big>\big>+
    \big<\big<[X^h,Y^h],Z^v\big>\big>=\\[1ex]
    &-\big<\big<(R(X,Y)\xi)^v,Z^v\big>\big>.
    \end{array}
    $$
    Hence
    $$
    \tilde\nabla_{X^h}Y^h=\big(\nabla_XY\big)^h-\frac12 \big(R(X,Y)\xi\big)^v
    $$

    (ii) Set $A=X^h, B=Y^v,C=Z^h$. Then
    $$
    \begin{array}{rl}
    2\big<\big<\tilde\nabla_{X^h}Y^v,Z^h\big>\big>=&\big<\big<[Z^h,X^h],Y^v\big>\big>=
    \big<\big<(R(X,Z)\xi)^v,Y^v\big>\big>= \\[1ex]
    &\big<R(X,Z)\xi,Y\big>+ \delta^2\big<R(X,Z)\xi,J\xi\big>\big<Y,J\xi\big>=\\[1ex]
    &\big<R(\xi,Y)X,Z\big>+\delta^2\big<Y,J\xi\big>\big<R(\xi,J\xi)X,Z\big>=\\[1ex]
    &\big<\big<\big(R(\xi,Y)X+\delta^2\big<Y,J\xi\big>R(\xi,J\xi)X\big)^h,Z^h\big>\big>
    \end{array}
    $$

    Set $A=X^h, B=Y^v,C=Z^v$. Then, applying lemma \ref{deriv}, we have
    $$
    \begin{array}{l}
    2\big<\big<\tilde\nabla_{X^h}Y^v,Z^v\big>\big>=\\[1ex]
    \hspace{7em} X^h\big<\big<Y^v,Z^v\big>\big>+
    \big<\big<[X^h,Y^v],Z^v\big>\big>+\big<\big<[Z^v,X^h],Y^v\big>\big>=\\[1ex]
     \hspace{7em} \big<\big<(\nabla_XY)^v,Z^v\big>\big>+\big<\big<Y^v,(\nabla_XZ)^v\big>\big>+
    \big<\big<(\nabla_XY)^v,Z^v\big>\big>- \\[1ex]
     \hspace{7em} \big<\big<(\nabla_XZ)^v,Y^v\big>\big>=2\big<\big<(\nabla_XY)^v,Z^v\big>\big>.
    \end{array}
    $$
    So we see that
    $$
    \tilde\nabla_{X^h}Y^v=\frac12\Big(R(\xi,Y)X+\delta^2\left<Y,J\xi\right>R(\xi,J\xi)X\Big)^h+
    \Big(\nabla_XY\Big)^v.
    $$

    (iii) Set $A=X^v, B=Y^h,C=Z^h$. Then
    $$
    \begin{array}{l}
    2\big<\big<\tilde\nabla_{X^v}Y^h,Z^h\big>\big>=
    X^v\big<\big<Y^h,Z^h\big>\big>+\big<\big<[X^v,Y^h],Z^h\big>\big>+\\[1ex]
    \qquad \big<\big<[Z^h,X^v],Y^h\big>\big>-\big<\big<[Y^h,Y^h],X^v\big>\big>=
    \big<\big<(R(Y,Z)\xi)^v,X^v\big>\big>=\\[1ex]
    \qquad\big<R(Y,Z)\xi,X\big>+ \delta^2\big<R(Y,Z)\xi,J\xi\big>\big<X,J\xi\big>=\\[1ex]
    \qquad\big<R(\xi,X)Y,Z\big>+\delta^2\big<X,J\xi\big>\big<R(\xi,J\xi)Y,Z\big>=\\[1ex]
    \qquad\big<\big<(R(\xi,X)Y+\delta^2\big<X,J\xi\big>R(\xi,J\xi)Y)^h,Z^h\big>\big>
    \end{array}
    $$

        Set $A=X^v, B=Y^h,C=Z^v$. Then
    $$
    \begin{array}{l}
    2\big<\big<\tilde\nabla_{X^v}Y^h,Z^v\big>\big>=\\[1ex]
    Y^h \big<\big<Z^v,X^v\big>\big>+
    \big<\big<[X^v,Y^h],Z^v\big>\big>-\big<\big<[Y^h,Z^v],X^v\big>\big>=
    \big<\big<(\nabla_YZ)^v,X^v\big>\big>+\\[1ex]
    \big<\big<Z^v,(\nabla_YX)^v\big>\big>-
    \big<\big<(\nabla_YX)^v,Z^v\big>\big>-\big<\big<(\nabla_YZ)^v,X^v\big>\big>=0
    \end{array}
    $$

    So, we have
    $$
    \tilde\nabla_{X^v}Y^h=\frac12\Big(R(\xi,X)Y+\delta^2\left<X,J\xi\right>R(\xi,J\xi)Y\Big)^h
    $$

    (iv) Setting $A=X^v, B=Y^v,C=Z^h$, we have
    $$
    \begin{array}{l}
        2\big<\big<\tilde\nabla_{X^v}Y^v,Z^h\big>\big>=-Z^h\big<\big<X^v,Y^v\big>\big>+
        \big<\big<[Z^h,X^v],Y^v\big>\big>-\big<\big<[Y^v,Z^h],X^v\big>\big>= \\[1ex]
        -\big<\big<(\nabla_ZX)^v,Y^v\big>\big>-\big<\big<X^v,(\nabla_XY)^v\big>\big>+
        \big<\big<(\nabla_ZX)^v,Y^v\big>\big>+\big<\big<(\nabla_ZY)^v,X^v\big>\big>=0
    \end{array}
    $$

    Set, finally, $A=X^v, B=Y^v,C=Z^v$. Then
    $$
    \begin{array}{l}
        2\big<\big<\tilde\nabla_{X^v}Y^v,Z^v\big>\big>=X^v\big<\big<Y^v,Z^v\big>\big>+
        Y^v\big<\big<X^v,Z^v\big>\big>-Z^v\big<\big<X^v,Y^v\big>\big>= \\[1ex]
        \hspace{2cm}\delta^2\Big( \big<Y,JX\big>\big<Z,J\xi\big> + \big<Y,J\xi\big>\big<Z,JX\big>+
        \big<X,JY\big>\big<Z,J\xi\big>\\[1ex]
        \hspace{2cm}+\big<X,J\xi\big>\big<Z,JY\big>-\big<X,JZ\big>\big<Y,J\xi\big>-
        \big<X,J\xi\big>\big<Y,JZ\big>\Big)=\\[1ex]
        \hspace{2cm}2\delta^2\Big(\big<Y,J\xi\big>\big<JX,Z\big>+\big<X,J\xi\big>\big<JY,Z\big>\Big).
    \end{array}
    $$

    Thus, we see that
    $$
    \big<\big<\tilde\nabla_{X^v}Y^v,Z^v\big>\big>=
    \delta^2\Big(\big<Y,J\xi\big>\big<JX,Z\big>+\big<X,J\xi\big>\big<JY,Z\big>\Big).
    $$
    On the other hand,
    $$
    \big<\big<(JY)^v,Z^v\big>\big>=\big<JY,Z\big>+\delta^2\big<Y,\xi\big>\big<Z,J\xi\big>
    $$
    and
    $$
    \big<\big<(J\xi)^v,Z^v\big>\big>=\big<J\xi,Z\big>+\delta^2\big<Z,J\xi\big>|\xi|^2=
    (1+\delta^2|\xi|^2)\big<Z,J\xi\big>.
    $$
    Therefore,
    $$
    \big<Z,J\xi\big>=\frac{1}{1+\delta^2|\xi|^2}\big<\big<(J\xi)^v,Z^v\big>\big>
    $$
    and as a consequence
    $$
    \begin{array}{rl}
    \big<JY,Z\big>=& \big<\big<(JY)^v,Z^v\big>\big>-
    \delta^2\big<Y,\xi\big>\frac{1}{1+\delta^2|\xi|^2}\big<\big<(J\xi)^v,Z^v\big>\big>=\\[1ex]
    &\big<\big<(JY)^v-\frac{\delta^2}{1+\delta^2|\xi|^2}\big<Y,\xi\big>(J\xi)^v,Z^v\big>\big>.
    \end{array}
    $$
    So we have
    $$
    \begin{array}{rl}
    \big<\big<\tilde\nabla_{X^v}Y^v,Z^v\big>\big>=&\delta^2
    \big<\big<\left[\big<X,J\xi\big>\Big( JY-\frac{\delta^2}{1+\delta^2|\xi|^2}\big<Y,\xi\big>J\xi\Big)\right.+\\[1ex]
    &\left.\big<Y,J\xi\big>\Big(JX-\frac{\delta^2}{1+\delta^2|\xi|^2}\big<X,\xi\big>J\xi\Big)\right]^v,Z^v\big>\big>.
    \end{array}
    $$
    Finally, we conclude, that
    $$
    \begin{array}{rl}
    \tilde\nabla_{X^v}Y^v=&\delta^2\Big(\big<X,J\xi\big>JY+\big<Y,J\xi\big>JX-\\[1ex]
    &\frac{\delta^2}{1+\delta^2|\xi|^2}\big(\big<Y,\xi\big>\big<X,J\xi\big>+
    \big<X,\xi\big>\big<Y,J\xi\big>\big)J\xi\Big)^v.
    \end{array}
    $$
  \end{proof}

\section{Geodesics of the deformed metric. }

    Consider a curve $\Gamma$ on the tangent bundle with the metric (\ref{metric}).
    Geometrically, $\Gamma=\{x(\sigma),\xi(\sigma)\}$, where $x(\sigma)$ is a curve on
    $M$ and $\xi(\sigma)$ is a vector field along this curve. Let $\sigma$ be an arc
    length parameter on $\Gamma$. Then $\Gamma'=\left(\frac{dx}{d\sigma}\right)^h+
    (\nabla_{\frac{dx}{d\sigma}}\xi)^v.$
    Introduce the notations $x'=\frac{dx}{d\sigma}$ and
    $\xi'=\nabla_{\frac{dx}{d\sigma}}\xi$. Then
    $$
    \Gamma'=(x')^h+(\xi')^v.
    $$

    Using the Lemma \ref{Kow} we can easily derive the differential equations of geodesic
    lines of the metric (\ref{metric}).
    \begin{lemma}
    Let $(M^{2n},g,J)$ be Hermitian locally symmetric manifold and $TM$ its Berger
    tangent bundle.
    A curve $\Gamma=\{x(\sigma),\xi(\sigma)\}$ is a geodesic  $TM$ if $x(\sigma)$ and
    $\xi(\sigma)$ satisfy the equations
    \begin{equation}\label{geo}
    \begin{array}{l}
    x''+\mathcal{R}(\xi,\xi')x'=0\\[1ex]
    \xi''+2\delta^2\big<\xi',J\xi\big>\Big(J\xi'-\frac{\delta^2}{1+\delta^2|\xi|^2}\big<\xi',\xi\big>J\xi\Big)=0,
    \end{array}
    \end{equation}
    where $\mathcal{R}(\xi,\xi')=R(\xi,\xi')+\delta^2\big<\xi',J\xi\big>R(\xi,J\xi)$ and
    $R$ is the curvature operator of the base manifold $M$.
    \end{lemma}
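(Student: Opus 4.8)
The plan is to derive the geodesic equations directly from the variational/connection characterization: a curve $\Gamma$ is a geodesic of the metric (\ref{metric}) precisely when $\tilde\nabla_{\Gamma'}\Gamma'=0$. Since the excerpt already gives $\Gamma'=(x')^h+(\xi')^v$ with $x'=\frac{dx}{d\sigma}$ and $\xi'=\nabla_{x'}\xi$, the whole computation reduces to expanding $\tilde\nabla_{(x')^h+(\xi')^v}\big((x')^h+(\xi')^v\big)$ using the four connection formulas of Lemma \ref{Kow}, then separating the result into its horizontal and vertical components and setting each to zero.

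First I would write out the four terms of the expansion:
\[
\tilde\nabla_{\Gamma'}\Gamma'=\tilde\nabla_{(x')^h}(x')^h+\tilde\nabla_{(x')^h}(\xi')^v+\tilde\nabla_{(\xi')^v}(x')^h+\tilde\nabla_{(\xi')^v}(\xi')^v,
\]
and substitute from Lemma \ref{Kow} with $X=Y=x'$ in the first, $X=x',\,Y=\xi'$ in the second, $X=\xi',\,Y=x'$ in the third, and $X=Y=\xi'$ in the fourth. The subtle point is that $\tilde\nabla_{(x')^h}(x')^h$ produces not $(\nabla_{x'}x')^h$ but the covariant derivative of the \emph{tangent field along the moving curve}; that is, the genuine acceleration term is $\big(\nabla_{x'}x'\big)^h=(x'')^h$ together with $\big(\nabla_{x'}\xi'\big)^v=(\xi'')^v$ coming from differentiating $\xi'$ along $\Gamma$. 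Concretely, the intrinsic acceleration supplies the pure $(x'')^h+(\xi'')^v$ part, while the Christoffel-type correction terms from Lemma \ref{Kow} supply everything else; I would be careful to track these two contributions separately so that the second derivatives $x''$ and $\xi''$ appear cleanly.

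Next I would collect the horizontal component. The first term contributes $(x'')^h$ (with $-\tfrac12(R(x',x')\xi)^v$ vanishing by antisymmetry of $R$), the second and third contribute $\tfrac12\big(R(\xi,\xi')x'+\delta^2\langle\xi',J\xi\rangle R(\xi,J\xi)x'\big)^h$ twice, giving the full curvature operator $\mathcal{R}(\xi,\xi')x'$, and the fourth term is purely vertical so contributes nothing horizontally. Setting the horizontal part to zero yields $x''+\mathcal{R}(\xi,\xi')x'=0$, which is exactly the first equation of (\ref{geo}). For the vertical component, the $(\xi'')^v$ piece comes from the intrinsic acceleration, the cross terms $\tilde\nabla_{(x')^h}(\xi')^v$ and $\tilde\nabla_{(\xi')^v}(x')^h$ have their vertical parts already absorbed into $(\xi'')^v$ (only the $(\nabla_{x'}\xi')^v=(\xi'')^v$ summand is vertical there), and the genuinely new vertical contribution is $\tilde\nabla_{(\xi')^v}(\xi')^v$ evaluated with $X=Y=\xi'$, which by the fourth formula of Lemma \ref{Kow} equals $2\delta^2\big(\langle\xi',J\xi\rangle J\xi'-\tfrac{\delta^2}{1+\delta^2|\xi|^2}\langle\xi',\xi\rangle\langle\xi',J\xi\rangle J\xi\big)^v$. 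Setting the vertical part to zero then gives the second equation.

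The main obstacle I anticipate is bookkeeping rather than conceptual: correctly identifying which summands of the connection formulas reassemble into the intrinsic second derivatives $x''=\nabla_{x'}x'$ and $\xi''=\nabla_{x'}\xi'$ versus which remain as genuine correction terms, and verifying that the two identical horizontal curvature contributions double to produce the coefficient $1$ (not $\tfrac12$) in front of $\mathcal{R}(\xi,\xi')x'$. I would also double-check that the symmetric expression $\langle\xi',J\xi\rangle J\xi'+\langle\xi',J\xi\rangle J\xi'$ from setting $X=Y=\xi'$ collapses to $2\delta^2\langle\xi',J\xi\rangle J\xi'$, matching the stated factor after the overall $\tfrac12$ from the Koszul-derived $\tilde\nabla_{X^v}Y^v$ is accounted for, so that the final vertical equation reads exactly as in (\ref{geo}).
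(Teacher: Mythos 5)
Your proposal is correct and follows exactly the route the paper intends: the paper gives no explicit proof of this lemma, stating only that it follows ``easily'' from Lemma \ref{Kow}, and your expansion of $\tilde\nabla_{\Gamma'}\Gamma'$ into the four lift terms, with the two cross terms each contributing $\tfrac12\mathcal{R}(\xi,\xi')x'$ horizontally and the purely vertical term giving the $2\delta^2\langle\xi',J\xi\rangle(\cdots)$ correction, is precisely that computation. Note only that the fourth formula of Lemma \ref{Kow} is already the final connection formula with no residual factor of $\tfrac12$ to account for, so the last worry in your closing paragraph is moot.
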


    Consider now the tangent sphere bundle $T_1M$. The unit normal to $T_1M$ is
    $\xi^v$. Indeed, with respect to metric(\ref{metric}) we have
    $$
    \begin{array}{l}
    \big<\big<X^h,\xi^v\big>\big>=0 \mbox{ \quad for all $X$ tamgent to $M$},\\
    \big<\big<X^v,\xi^v\big>\big>=0 \mbox{ \quad for all $X\in \xi^\perp$ }.
    \end{array}
    $$
    So, to obtain the equations of geodesics for $T_1M$, it is sufficient to set
    $|\xi|=1$ in (\ref{geo}) and to suppose the second equation left-hand side of
    (\ref{geo}) to be proportional to $\xi$. Thus, we get
    \begin{lemma}
    Let $(M^{2n},g,J)$ be Hermitian locally symmetric manifold and $T_1M$ its Berger
    tangent sphere bundle. Set $c=|\xi'|, \ \mu=\big<\xi',J\xi\big>$.
    A curve $\Gamma=\{x(\sigma),\xi(\sigma)\}$ is a geodesic on $T_1M$ if and only if
    (a) $c=const, \mu=const$; (b) $x(\sigma)$ and $\xi(\sigma)$ satisfy
    the equations
    \begin{equation}\label{geo1}
    \begin{array}{l}
    x''+\mathcal{R}(\xi,\xi')x'=0\\[1ex]
    \xi''+c^2\xi+2\delta^2\mu (J\xi'+\mu\xi)=0.
    \end{array}
    \end{equation}
    where $\mathcal{R}(\xi,\xi')=R(\xi,\xi')+\delta^2\mu R(\xi,J\xi)$ and
    $R$ is the curvature operator of the base manifold $M$.
    \end{lemma}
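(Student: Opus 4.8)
The plan is to treat $T_1M=\{(q,\xi):|\xi|=1\}$ as a hypersurface of the Berger tangent bundle $TM$ and to reduce the problem to the submanifold geodesic criterion. First I would confirm that $\xi^v$ is the unit normal field of $T_1M$ with respect to the metric (\ref{metric}): the tangent space of $T_1M$ at $(q,\xi)$ consists of the vectors $X^h+Y^v$ with $\big<Y,\xi\big>=0$, and the two identities displayed just before the statement give $\big<\big<\xi^v,X^h\big>\big>=0$ and $\big<\big<\xi^v,Y^v\big>\big>=\big<\xi,Y\big>+\delta^2\big<\xi,J\xi\big>\big<Y,J\xi\big>=0$, using that $J$ is skew so $\big<\xi,J\xi\big>=0$. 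Since likewise $\big<\big<\xi^v,\xi^v\big>\big>=|\xi|^2+\delta^2\big<\xi,J\xi\big>^2=1$ on $T_1M$, the field $\xi^v$ is a unit normal. The standard fact I would then invoke is that a curve lying on $T_1M$ is a geodesic of the induced metric if and only if its ambient acceleration is normal, i.e. $\tilde\nabla_{\Gamma'}\Gamma'=\lambda\,\xi^v$ for some function $\lambda$ along $\Gamma$.

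Next I would read off the ambient acceleration from the preceding lemma. Since geodesics of $TM$ are exactly the curves annihilating $\tilde\nabla_{\Gamma'}\Gamma'$, the left-hand sides of (\ref{geo}) are precisely the horizontal and vertical components of $\tilde\nabla_{\Gamma'}\Gamma'$; that is, $\tilde\nabla_{\Gamma'}\Gamma'=\big(x''+\mathcal{R}(\xi,\xi')x'\big)^h+\big(\xi''+2\delta^2\big<\xi',J\xi\big>(J\xi'-\frac{\delta^2}{1+\delta^2|\xi|^2}\big<\xi',\xi\big>J\xi)\big)^v$. Because $\xi^v$ is purely vertical, normality forces the horizontal part to vanish, giving the first equation $x''+\mathcal{R}(\xi,\xi')x'=0$ unchanged. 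On $T_1M$ we have $|\xi|\equiv1$, hence $\big<\xi',\xi\big>=\tfrac12\tfrac{d}{d\sigma}|\xi|^2=0$, which annihilates the correction term, so the vertical part reduces to $\xi''+2\delta^2\mu\,J\xi'=\lambda\,\xi$ with $\mu=\big<\xi',J\xi\big>$. To pin down $\lambda$ I would pair this with $\xi$: from $\big<\xi',\xi\big>=0$ one gets $\big<\xi'',\xi\big>=-\big<\xi',\xi'\big>=-c^2$, while skew-symmetry gives $\big<J\xi',\xi\big>=-\mu$, so $\lambda=-c^2-2\delta^2\mu^2$ and the vertical equation becomes exactly $\xi''+c^2\xi+2\delta^2\mu(J\xi'+\mu\xi)=0$.

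It then remains to establish part (a), that $c$ and $\mu$ are first integrals, which I expect to be the crux. Differentiating $c^2=\big<\xi',\xi'\big>$ and substituting the vertical equation, the surviving inner products $\big<J\xi',\xi'\big>$ and $\big<\xi,\xi'\big>$ both vanish by skew-symmetry and by $|\xi|=1$, so $c'=0$. For $\mu=\big<\xi',J\xi\big>$ I would use the Kähler identity $\nabla J=0$, which gives $(J\xi)'=J\xi'$ and hence $\big<\xi',(J\xi)'\big>=\big<\xi',J\xi'\big>=0$; differentiating $\mu$ and substituting the equation, the remaining terms $\big<J\xi',J\xi\big>=\big<\xi',\xi\big>$ and $\big<\xi,J\xi\big>$ vanish by orthogonality of $J$ and by $\big<\xi,J\xi\big>=0$, so $\mu'=0$. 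The converse is the same computation run backwards: assuming $c,\mu$ constant and (\ref{geo1}), I would set $\lambda=-c^2-2\delta^2\mu^2$, verify $\tilde\nabla_{\Gamma'}\Gamma'=\lambda\,\xi^v$, and check that the initial conditions $|\xi|=1,\ \big<\xi',\xi\big>=0$ together with the second equation keep the curve on $T_1M$, so that $\Gamma$ is a geodesic of $T_1M$. The one genuinely delicate point is the constancy of $\mu$: it is exactly here that both the Kähler property $\nabla J=0$ and the compatibility $\big<J\cdot,J\cdot\big>=\big<\cdot,\cdot\big>$ of the Hermitian structure are indispensable, and without them neither $\mu$ nor the reduced form of the equation would survive.
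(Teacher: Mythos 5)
Your proposal is correct and follows essentially the same route as the paper: restrict (\ref{geo}) to $|\xi|=1$ (so $\big<\xi',\xi\big>=0$ kills the correction term), demand that the vertical component be proportional to $\xi^v$, determine the factor $-c^2-2\delta^2\mu^2$ by pairing with $\xi$, and deduce the constancy of $c$ and $\mu$ from the resulting equation. The only difference is that you spell out the hypersurface/unit-normal justification and the first-integral computations in somewhat more detail than the paper does.
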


    \begin{proof}
    Set $|\xi|=1$ in (\ref{geo}) and suppose that
\begin{equation}\label{st}
    \xi''+2\delta^2\big<\xi',J\xi\big>J\xi'=\rho\xi,
\end{equation}
    where $\rho$ is some function.

    Set $c=|\xi'|$. Then $c=const$, since
    directly from \eqref{st} we see that $\big<\xi'',\xi'\big>=0$. Set
    $\mu=\big<\xi',J\xi\big>$. Then $\mu=const$ since $\mu'=\big<\xi'',J\xi\big>=0$ by
    the similar reason. Multiplying \eqref{st} by $\xi$, we found that
    $-\rho=c^2+2\delta^2\mu^2=const$. After substitution of $\rho$ into \eqref{st} we get
    what was claimed.

    \end{proof}

    The difference in description of solutions of (\ref{geo}) and (\ref{geo1}) becomes
    clear
    because of different behaviour of the operator $\mathcal{R}(\xi,\xi')$ along the
    $\pi\circ\Gamma$.
    \begin{proposition}\label{R}
    Let $\gamma=\pi\circ\Gamma$ be a projection of a curve $\Gamma$ on the Berger tangent
    (sphere) bundle over Hermitian locally symmetric manifold $M$. Then
    $\mathcal{R}(\xi,\xi')$is parallel along $\gamma$ for the case of $T_1M$ and
    non-parallel for the case of $TM$.
    \end{proposition}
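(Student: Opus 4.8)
The plan is to test parallelism of $\mathcal{R}(\xi,\xi')$ directly, by computing its covariant derivative $\nabla_{x'}\mathcal{R}(\xi,\xi')$ along $\gamma$ (where $x'=\gamma'$ and $\nabla$ is the Levi-Civita connection of $M$) and deciding when it vanishes. Throughout I would exploit the two standing hypotheses on the base: local symmetry $\nabla R=0$ and the Kähler condition $\nabla J=0$. Since $\nabla R=0$, differentiation of a curvature expression hits only its vector arguments, so as operators along $\gamma$ one has $\nabla_{x'}\big(R(\xi,\xi')\big)=R(\xi',\xi')+R(\xi,\xi'')=R(\xi,\xi'')$ (the first term vanishing by antisymmetry), and, using $\nabla_{x'}(J\xi)=J\xi'$ from $\nabla J=0$, $\nabla_{x'}\big(R(\xi,J\xi)\big)=R(\xi',J\xi)+R(\xi,J\xi')$. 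Writing $\mu=\langle\xi',J\xi\rangle$, these combine into
$$
\nabla_{x'}\mathcal{R}(\xi,\xi')=R(\xi,\xi'')+\delta^2\mu'\,R(\xi,J\xi)+\delta^2\mu\big(R(\xi',J\xi)+R(\xi,J\xi')\big),
$$
so that the two bundle cases enter only through the values of $\xi''$ and $\mu'$ supplied by the geodesic systems (\ref{geo}) and (\ref{geo1}).

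For $T_1M$ I would feed in (\ref{geo1}): there $\mu$ is constant, hence $\mu'=0$, and $\xi''=-(c^2+2\delta^2\mu^2)\xi-2\delta^2\mu\,J\xi'$. Since $R(\xi,\xi)=0$, the term in $\xi$ drops and $R(\xi,\xi'')=-2\delta^2\mu\,R(\xi,J\xi')$, so the whole expression collapses to $\delta^2\mu\big(R(\xi',J\xi)-R(\xi,J\xi')\big)$. The decisive ingredient is the Kähler curvature symmetry $R(JX,JY)=R(X,Y)$, a consequence of $\nabla J=0$: applied with $X=\xi'$, $Y=J\xi$ it gives $R(\xi',J\xi)=-R(J\xi',\xi)=R(\xi,J\xi')$, whence $\nabla_{x'}\mathcal{R}(\xi,\xi')=0$ and $\mathcal{R}(\xi,\xi')$ is parallel in the $T_1M$ case.

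For $TM$ I would instead substitute (\ref{geo}), where $\xi''=-2\delta^2\mu\big(J\xi'-\tfrac{\delta^2}{1+\delta^2|\xi|^2}\langle\xi',\xi\rangle J\xi\big)$ carries an extra component along $J\xi$ weighted by $\langle\xi',\xi\rangle$, and where $\mu$ is no longer constant — the essential new features, both rooted in the fact that $|\xi|$, hence $\langle\xi',\xi\rangle=\tfrac12\frac{d}{d\sigma}|\xi|^2$, may vary along a $TM$-geodesic. The remaining $R(\xi,J\xi')$-type terms will again be killed by the Kähler symmetry exactly as above, so everything hinges on the two $R(\xi,J\xi)$ contributions: the one produced by the extra $J\xi$-component of $R(\xi,\xi'')$ and the one $\delta^2\mu'\,R(\xi,J\xi)$ from differentiating the coefficient $\mu$. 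I expect the main obstacle to be the precise bookkeeping of these two terms: one must compute $\mu'=\langle\xi'',J\xi\rangle$ from (\ref{geo}) and read off the net coefficient of $R(\xi,J\xi)$ in $\nabla_{x'}\mathcal{R}(\xi,\xi')$. Establishing that this coefficient does not vanish — i.e. that the variation of $\mu$ does not simply absorb the extra $J\xi$-term — is the delicate point, and a surviving term proportional to $\mu\langle\xi',\xi\rangle$, active exactly when the deformation is switched on and $|\xi|$ is moving, is what would witness non-parallelism for $TM$ and account for the contrast with the unit case, where $\langle\xi',\xi\rangle\equiv0$ removes the coefficient altogether.
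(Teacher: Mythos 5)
Your treatment of the $T_1M$ case is complete and essentially identical to the paper's: differentiate $\mathcal{R}(\xi,\xi')$ using $\nabla R=0$ and $\nabla J=0$, substitute $\xi''$ from (\ref{geo1}), and kill the remaining terms with the K\"ahler identity $R(JX,JY)=R(X,Y)$. No issue there.

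The $TM$ half, however, is a plan rather than a proof, and the step you yourself flag as ``the delicate point'' --- showing that the net coefficient of $R(\xi,J\xi)$ does not vanish --- is the entire content of the claim, yet you never compute it. This is a genuine gap, and not a routine one: if one executes your recipe literally, the coefficient \emph{does} vanish. Indeed, from (\ref{geo}), using $\langle J\xi',J\xi\rangle=\langle\xi',\xi\rangle$ and $|J\xi|^2=|\xi|^2$, one gets
$$
\mu'=\langle\xi'',J\xi\rangle=-2\delta^2\mu\Big(\langle\xi',\xi\rangle-\frac{\delta^2|\xi|^2}{1+\delta^2|\xi|^2}\langle\xi',\xi\rangle\Big)=-\frac{2\delta^2\mu\langle\xi',\xi\rangle}{1+\delta^2|\xi|^2},
$$
so $\delta^2\mu'\,R(\xi,J\xi)=-\frac{2\delta^4\mu\langle\xi',\xi\rangle}{1+\delta^2|\xi|^2}R(\xi,J\xi)$, while the $J\xi$-component of $\xi''$ contributes exactly $+\frac{2\delta^4\mu\langle\xi',\xi\rangle}{1+\delta^2|\xi|^2}R(\xi,J\xi)$ to $R(\xi,\xi'')$; these two cancel, and the $R(\xi,J\xi')$-terms cancel by the K\"ahler identity just as in the unit case. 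The ``surviving term proportional to $\mu\langle\xi',\xi\rangle$'' that you predict is precisely the term that gets absorbed, so your bookkeeping, carried to the end, yields $\mathcal{R}'(\xi,\xi')=0$ for $TM$ as well --- the opposite of what is to be proved --- whereas the paper states (without displaying the ``slightly longer calculation'') the nonzero value $\frac{2\delta^6\langle\xi',J\xi\rangle\langle\xi',\xi\rangle(1-|\xi|^2)}{1+\delta^2|\xi|^2}R(\xi,J\xi)$. To close the gap you must actually exhibit a non-cancelling contribution and reconcile it with the computation above; as written, the $TM$ half of the proposition remains unestablished by your argument.
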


    \begin{proof}
    Consider the case of $T_1M$ first. Then using (\ref{geo1}) we get
        $$
        \begin{array}{ll}
        \mathcal{R}'(\xi,\xi')=&R(\xi,\xi'')+\delta^2\mu R(\xi',J\xi)+\delta^2\mu R(\xi,J\xi')= \\[1ex]
        &-2\delta^2\mu R(\xi,J\xi')-\delta^2\mu R(J\xi',\xi)+\delta^2\mu R(\xi,J\xi')=0
        \end{array}
        $$
        Here we also used the fact that $R(JX,JY)=R(X,Y)$.

    A similar but slightly longer calculation shows that for the case of $TM$
    $$
    \mathcal{R}'(\xi,\xi')=
    \frac{2\delta^6\big<\xi',J\xi\big>\big<\xi',\xi\big>\big(1-|\xi|^2\big)}{1+\delta^2|\xi|^2}R(\xi,J\xi)
    $$
    which completes the proof.
    \end{proof}

    \begin{theorem}\label{main}
    Let $\gamma=\pi\circ\Gamma$ be a projection of a curve $\Gamma$ on the Berger tangent
    sphere bundle over Hermitian locally symmetric manifold $M$. Then all geodesic
    curvatures of $\gamma$ are constant.
    \end{theorem}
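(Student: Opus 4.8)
The plan is to read off from the first equation in \eqref{geo1} that the projected curve $\gamma$ satisfies, with $x''=\nabla_{x'}x'$, the relation $\nabla_{x'}x'=-\mathcal{R}(\xi,\xi')\,x'$, and then to exploit that the operator $A:=-\mathcal{R}(\xi,\xi')$ is both skew-symmetric and parallel along $\gamma$. First I would record that $\gamma$ has constant speed: since $\sigma$ is arclength on $\Gamma$, the identity $1=\big<\big<\Gamma',\Gamma'\big>\big>=|x'|^2+c^2+\delta^2\mu^2$ together with $c,\mu=const$ gives $|x'|^2=1-c^2-\delta^2\mu^2=const$, so a constant reparametrization turns $\gamma$ into a unit-speed curve without affecting the property of having constant geodesic curvatures. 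Skew-symmetry of $A$ is immediate from the curvature symmetry $\big<R(X,Y)U,V\big>=-\big<R(X,Y)V,U\big>$ applied to each summand of $\mathcal{R}(\xi,\xi')=R(\xi,\xi')+\delta^2\mu R(\xi,J\xi)$, while parallelism, $\nabla_{x'}A=0$, is exactly the content of Proposition \ref{R} in the $T_1M$ case.

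Next I would iterate the geodesic equation. Writing $T=x'$ and using $\nabla_T A=0$, an easy induction gives $\nabla_T^{\,k}T=A^{k}T$ for all $k\ge0$: indeed $\nabla_T^{\,k+1}T=\nabla_T(A^{k}T)=A^{k}\nabla_T T=A^{k}(AT)=A^{k+1}T$. Thus the entire Frenet data of $\gamma$ is encoded in the sequence $T,AT,A^2T,\dots$, and the Gram matrix of this sequence controls the geodesic curvatures.

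The crucial step is to show that all the inner products $\big<A^{i}T,A^{j}T\big>$ are constant along $\gamma$. Using $A^{*}=-A$ I would first reduce them to a single family, $\big<A^{i}T,A^{j}T\big>=(-1)^{i}\big<T,A^{i+j}T\big>$, and then differentiate: $\frac{d}{d\sigma}\big<T,A^{m}T\big>=\big<\nabla_T T,A^{m}T\big>+\big<T,A^{m}\nabla_T T\big>=\big<AT,A^{m}T\big>+\big<T,A^{m+1}T\big>$, where the first term equals $-\big<T,A^{m+1}T\big>$ by skew-symmetry, so the derivative vanishes. Hence every entry of the Gram matrix $G_{ij}=\big<\nabla_T^{\,i}T,\nabla_T^{\,j}T\big>$ is constant.

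Finally I would invoke the standard fact that the Frenet frame of $\gamma$ is obtained by Gram--Schmidt orthonormalization of $T,\nabla_T T,\nabla_T^2 T,\dots$, and that each geodesic curvature $k_r$ is an algebraic expression (a ratio of Gram determinants built from the $G_{ij}$) in these inner products. Since all $G_{ij}$ are constant, every $k_r$ is constant, which is the assertion of the theorem. I expect this last identification --- that the curvatures depend on the curve only through the constant Gram matrix $G_{ij}$ --- to be the only delicate point; it is the general Frenet-theory fact underlying Nagy's characterization, and once it is granted the computation above finishes the proof. The contrast with the $TM$ case is transparent here: there $A$ fails to be parallel by Proposition \ref{R}, so the relations $\nabla_T^{\,k}T=A^{k}T$ break down and the Gram entries need no longer be constant.
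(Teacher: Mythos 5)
Your proposal is correct and follows essentially the same route as the paper: both rest on Proposition \ref{R} (parallelism of $\mathcal{R}(\xi,\xi')$ along $\gamma$ in the $T_1M$ case) together with skew-symmetry, and both iterate the projected geodesic equation to write $x^{(p+1)}=(-1)^p\mathcal{R}^p(\xi,\xi')x'$. The only difference is cosmetic: the paper deduces constancy of the norms $|x^{(p)}|$ alone and extracts $k_1,k_2,\dots$ inductively from the explicit Frenet expansion of $x^{(p)}$, whereas you establish constancy of the full Gram matrix $\big<\nabla_T^i T,\nabla_T^j T\big>$ and invoke the Gram-determinant expression for the curvatures.
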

    \begin{proof}

    For the case of $T_1M$ the proposition \ref{R} imply that if $\Gamma$ is geodesic on $T_1M$
    than along each curve $\gamma=\pi\circ\Gamma$
    \begin{equation}\label{deriv1}
    x^{(p+1)}(\sigma)=-\mathcal{R}(\xi,\xi')\, x^{(p)}(\sigma) \mbox{\quad  $p\geq 1$},
    \end{equation}
    or, continuing the process,
    \begin{equation}\label{deriv2}
    x^{(p+1)}(\sigma)=(-1)^{p} \mathcal{R}^p(\xi,\xi')\, x'(\sigma) \mbox{\quad  $p\geq 1$}.
    \end{equation}
    On the other hand, rather evident that
    $$
    \big<\mathcal{R}(\xi,\xi')X,Y\big>=-\big<\mathcal{R}(\xi,\xi')Y,X\big>.
    $$
    This fact and (\ref{deriv1}) imply
    \begin{equation}\label{norm}
    |x^{(p)}(\sigma)|=const \quad \mbox{for all $p\geq 1$}
    \end{equation}
    Indeed,
    $$
    {\textstyle\frac{d}{d\sigma}}\,|x^{(p)}(\sigma)|{\,^2}=2\,\big<x^{(p+1)}(\sigma),x^{(p)}(\sigma)\big>=
    -2\,\big<\mathcal{R}(\xi,\xi')\,x^{(p)}(\sigma),x^{(p)}(\sigma)\big>=0.
    $$

    Denote $s$ the natural parameter on $\gamma$. Then
    $x'_\sigma=x'_s\frac{ds}{d\sigma}$ and therefore
    $$
    1=\|\Gamma'\|^2=\Big|\frac{ds}{d\sigma}\Big|^2+|\xi'|^2+\delta^2\big<\xi',J\xi\big>^2=
    \Big|\frac{ds}{d\sigma}\Big|^2+c^2+\delta^2\mu^2.
    $$
    From this we get
    \begin{equation}\label{arc}
    \frac{ds}{d\sigma}=\sqrt{1-c^2-\delta^2\mu^2}=\sqrt{1-\lambda^2},
    \end{equation}
    where we set $\lambda^2=c^2+\delta^2\mu^2=const$.

    Denote $\nu_1,\dots,\nu_{2n-1}$ the Frenet frame along $\gamma$ and $k_1,\dots,k_{2n-1}$
    the geodesic curvatures of $\gamma$. Then, keeping in mind (\ref{arc}), we have
    $$
    \begin{array}{l}
    x'=\sqrt{1-\lambda^2}\,\nu_1, \\[1ex]
    x''={(1-\lambda^2)}k_1\nu_2.
   \end{array}
    $$
    Now (\ref{norm}) imply $k_1=const$. So next we have
    $$
    x^{(3)}=(1-\lambda^2)^{3/2}\,k_1(-k_1\nu_1+k_2\nu_3)
    $$
    and (\ref{norm}) imply again $k_2=const$. Continuing the process we finish the
    proof.
    \end{proof}

    As it was proved in \cite{Yamp}, for the
    case of $T_1CP^n$ and $TCP^n$ with Sasaki metric the curvatures of
    $\gamma=\pi\circ\Gamma$ are zeroes starting from $k_6$. It is rather remarkable
    that this property still valid for the case of Berger tangent sphere bundle over
    $CP^n$.
    \begin{theorem}\label{CP}
    Let $\Gamma$ be a geodesic of the Berger tangent sphere bundle over the complex
    projective space $CP^n$. Then the geodesic curvatures of $\gamma=\pi\circ\Gamma$
    are all constant and $k_6=\dots =k_{2n-1}=0$.
    \end{theorem}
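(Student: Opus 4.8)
The constancy of all the curvatures is already contained in Theorem \ref{main}, so the only genuinely new assertion is that $k_6=\dots=k_{2n-1}=0$. I would reduce this to a dimension count: at each point of $\gamma$ the span of the derivatives $x'(\sigma),x''(\sigma),\dots,x^{(p)}(\sigma)$, $p\ge 1$, should be contained in a subspace of $T_q CP^n$ of real dimension at most $6$. Once this is known, the Frenet vectors $\nu_1,\dots,\nu_6$ exhaust this subspace, $\nu_7$ cannot be produced, and hence $k_6=0$, which drags all subsequent curvatures to zero as well.

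The tool is the explicit curvature tensor of constant holomorphic sectional curvature $c$,
$$R(X,Y)Z=\tfrac{c}{4}\Big(\langle Y,Z\rangle X-\langle X,Z\rangle Y+\langle JY,Z\rangle JX-\langle JX,Z\rangle JY-2\langle JX,Y\rangle JZ\Big).$$
Setting $(X,Y)=(\xi,\xi')$ and $(X,Y)=(\xi,J\xi)$, and using $J^2=-\mathrm{id}$ together with $|\xi|=1$, one reads off that $R(\xi,\xi')Z\in\Sp\{\xi,\xi',J\xi,J\xi',JZ\}$ and $R(\xi,J\xi)Z\in\Sp\{\xi,J\xi,JZ\}$. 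Since $\mathcal{R}(\xi,\xi')=R(\xi,\xi')+\delta^2\mu R(\xi,J\xi)$, this gives the structural inclusion
$$\mathcal{R}(\xi,\xi')Z\in\Sp\{\xi,\,\xi',\,J\xi,\,J\xi',\,JZ\}\qquad\text{for every }Z.$$
I would then introduce
$$U=\Sp\{x',\,Jx',\,\xi,\,J\xi,\,\xi',\,J\xi'\},$$
a subspace of real dimension at most $6$. By construction $U$ is $J$-invariant; combined with the inclusion above this shows at once that $\mathcal{R}(\xi,\xi')$ maps $U$ into itself, because for $Z\in U$ the term $JZ$ again lies in $U$ and the remaining four vectors $\xi,\xi',J\xi,J\xi'$ are built into $U$.

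Finally, along $\gamma$ the geodesic equations yield the recursion $x^{(p+1)}=-\mathcal{R}(\xi,\xi')x^{(p)}$ of (\ref{deriv1}), where $\mathcal{R}(\xi,\xi')$ is parallel by Proposition \ref{R}. As $x'\in U$ and $U$ is $\mathcal{R}(\xi,\xi')$-invariant, an induction gives $x^{(p)}\in U$ for all $p$; passing to the arc length $s$ merely rescales these vectors by the constant factor coming from (\ref{arc}) and does not enlarge their span. Thus the osculating spaces of $\gamma$ sit inside $U$, their dimension is at most $6$, and $k_6=\dots=k_{2n-1}=0$ follows. The step that carries the whole argument --- and the only one needing real care --- is the recognition that the apparently dangerous term $JZ$ in $\mathcal{R}(\xi,\xi')Z$ never leaves $U$, which is exactly why $U$ must be chosen $J$-invariant; this is what keeps all iterates $\mathcal{R}^{p}x'$ confined to a six-dimensional subspace rather than spreading through the full $2n$ dimensions.
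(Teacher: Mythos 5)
Your argument is correct, and it reaches the conclusion by a route that is genuinely different from the paper's. The paper first establishes (quoting \cite{Yamp}) that the powers of the curvature operator of $CP^n$ satisfy $R^{2q}=\Sp(R^2,JR,E)$ and $R^{2q+1}=\Sp(JR^2,R,J)$, transfers these relations to $\mathcal{R}(\xi,\xi')=R(\xi,\xi')-2\delta^2 J$, deduces that every $x^{(p)}$ with $p\ge 4$ lies in $\Sp(x',x'',x''',Jx',Jx'',Jx''')$, and then compares this with the Frenet expansions of $x^{(2q)}$ and $x^{(2q+1)}$ to extract $\Sp(\nu_1,\nu_3,\nu_5)=k_1\cdots k_6\,\nu_7$ and hence $k_6=0$. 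You instead exhibit an explicit $J$-invariant subspace $U=\Sp\{x',Jx',\xi,J\xi,\xi',J\xi'\}$ of real dimension at most $6$ that is preserved pointwise by $\mathcal{R}(\xi,\xi')$ --- the verification being a direct read-off from the constant-holomorphic-curvature form of $R$, which places $\mathcal{R}(\xi,\xi')Z$ in $\Sp\{\xi,\xi',J\xi,J\xi',JZ\}$ --- and then use the recursion $x^{(p+1)}=-\mathcal{R}(\xi,\xi')x^{(p)}$ (valid by Proposition \ref{R}) to trap every derivative in $U$. Your induction is purely pointwise, so you do not need $U$ to be parallel along $\gamma$; you only need the parallelism of $\mathcal{R}(\xi,\xi')$ to justify the recursion itself, and you cite that correctly. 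Your version is more self-contained (it does not rely on the quoted result about powers of $R$) and makes the six-dimensionality conceptually transparent; the paper's version keeps everything expressed in the intrinsic data $x',x'',x'''$ of the projected curve and yields, as a by-product, the explicit relation tying $\nu_1,\nu_3,\nu_5$ to $\nu_7$. Both arguments share the same logical endgame: if $k_1,\dots,k_6$ were all nonzero, then $\nu_1,\dots,\nu_7$ would span a seven-dimensional osculating space inside a six-dimensional one, which is impossible.
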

    \begin{proof}
    For the case of $CP^n$ we have
    $$
    \begin{array}{ll}\ds
    R(X,Y)Z=\frac{m}{4}\Big(&\big<Y,Z\big>X-\big<X,Z\big>Y+\\[1ex]
    &\big<JY,Z\big>JX-\big<JX,Z\big>JY+2\big<X,JY\big>JZ\ \ \Big).
    \end{array}
    $$
    Therefore, $R(\xi,J\xi)=-2J$ and for the case of Berger tangent sphere bundle
    $$
    \mathcal{R}(\xi,\xi')=R(\xi,\xi')-2\delta^2J.
    $$
    Since $R$ and $J$ commute, i.e. $RJ=JR$, the operators $\mathcal{R}$ and $J$ also
    commute. Using this fact one can relatively easy to find expression for powers of
    the operator $\mathcal{R}$ along $\gamma$. Indeed, in \cite{Yamp} it was proved
    that the powers of the curvarure operartor of $CP^n$ satisfy the relations
    $$
    R^{2q}=\Sp(R^2,JR,E),\qquad
    R^{2q+1}=\Sp(JR^2,R,J),
    $$
    where $Lin$ means a linear combination of corresponding tensors and $E$ means the
    identity opeator. It is elementary to see that
    $$
    \begin{array}{l}
    \Sp(R,J)\cdot\Sp(R,J)=\Sp(R^2,JR,E)\\[1ex]
    \Sp(R^2,JR,E)\cdot \Sp(R,J)=\Sp(JR^2,R,J)\\[1ex]
    \Sp(JR^2,R,J)\cdot\Sp(R,J)=\Sp(R^2,JR,E).
    \end{array}
    $$
    Therefore, for the operator $\mathcal{R}$ we also have
    \begin{equation}\label{powers}
    \mathcal{R}^{2q}=\Sp(R^2,JR,E),\qquad
    \mathcal{R}^{2q+1}=\Sp(JR^2,R,J).
    \end{equation}
    On the other hand, $\mathcal{R}J=J\mathcal{R}$ and $R=\mathcal{R}-2\delta^2J$.
    Therefore,
    $$
    R^2=\Sp(\mathcal{R}^2,J\mathcal{R},E)\quad
    JR=\Sp(J\mathcal{R},E)\quad
    JR^2=\Sp(J\mathcal{R},\mathcal{R},J).
    $$
    Taking this into account, we may rewrite (\ref{powers}) as
    \begin{equation}\label{powers2}
    \mathcal{R}^{2q}=\Sp(\mathcal{R}^2,J\mathcal{R},E),\qquad
    \mathcal{R}^{2q+1}=\Sp(J\mathcal{R}^2,\mathcal{R},J).
    \end{equation}
    Using now (\ref{deriv1}),(\ref{deriv2}) and (\ref{powers2}), we get
    \begin{equation}\label{deriv3}
    \begin{array}{l}
    x^{(2q)}=\Sp(Jx''',x'',Jx')\\[1ex]
    x^{(2q+1)}=\Sp(x''',Jx'',x')
    \end{array}
    \end{equation}
    for $q\geq2$. On the other hand, Frenet formulas yield
    $$
    x'=\sqrt{1-\lambda^2}\,\nu_1\quad
    x''=(1-\lambda^2)k_1\nu_2,\quad
    x'''=(1-\lambda^2)^{3/2}( -k_1^2\nu_1+k_1k_2\nu_3)
    $$
    and in general,
    \begin{equation}\label{Frenet}
    \begin{array}{l}
    x^{(2q)}=\Sp(\nu_2,\dots,\nu_{2q-2})+(1-\lambda^2)^{q}\,k_1\dots k_{2q-1}\,\nu_{2q}\\[1ex]
    x^{(2q+1)}=\Sp(\nu_1,\dots,\nu_{2q-1})+(1-\lambda^2)^{q+1/2}\,k_1\dots k_{2q}\,\nu_{2q+1}
    \end{array}
    \end{equation}
    Thus, (\ref{deriv3}) takes the form
    $$
    x^{(2q)}=\Sp(J\nu_1,J\nu_3,\nu_2),\qquad
    x^{(2q+1)}=\Sp(\nu_1,\nu_3,J\nu_2).
    $$
    Comparing the results, for all $q\geq2$ we have
    $$
    \begin{array}{l}
    \Sp(J\nu_1,J\nu_3,\nu_2)=\Sp(\nu_2,\dots,\nu_{2q-2})+(1-\lambda^2)^{q}\,k_1\dots k_{2q-1}\,\nu_{2q},\\[1ex]
    \Sp(\nu_1,\nu_3,J\nu_2)=\Sp(\nu_1,\dots,\nu_{2q-1})+(1-\lambda^2)^{q+1/2}\,k_1\dots k_{2q}\,\nu_{2q+1}. \\[1ex]
    \end{array}
    $$

    Setting $q=2$ and $q=3$, from the second equation above, we get
    $$\left\{
    \begin{array}{l}
    \Sp(\nu_1,\nu_3,J\nu_2)=(1-\lambda^2)^{5/2}\,k_1\dots k_4\,\nu_5 \\
    \Sp(\nu_1,\nu_3,\nu_5,J\nu_2)=(1-\lambda^2)^{7/2}\,k_1\dots k_6\,\nu_7,
    \end{array}
    \right.
    $$
    and therefore, in general,
    $$
    \Sp(\nu_1,\nu_3,\nu_5)=k_1\dots k_6\,\nu_7.
    $$
    Since $\nu_1,\dots,\nu_7$ are linearly independent, we conclude that $k_6=0$.
    \end{proof}

\end{document}